\theoremstyle{plain}
\newtheorem{theorem}{Theorem}[section]
\newtheorem{lemma}[theorem]{Lemma}
\theoremstyle{definition}
\newtheorem{definition}[theorem]{Definition}
\theoremstyle{remark}
\title{Initial boundary value problems for a fractional differential equation with hyper-Bessel operator}
\author [*]{Fatma Al-Musalhi}
\author [*]{Nasser Al-Salti}
\author [*]{Erkinjon Karimov} 
\affil[*] {Department of Mathematics, Sultan Qaboos University, P.O. Box 36 Al-Khoudh, Oman}
\begin{document}
\maketitle
\begin{abstract}
Direct and  inverse source problems of a fractional diffusion equation with regularized Caputo-like counterpart hyper-Bessel operator are considered. Solutions to these problems are constructed based on appropriate eigenfunction expansion and results on existence and uniqueness are established. To solve the resultant equations, a solution to a non-homogeneous fractional differential equation with regularized Caputo-like counterpart hyper-Bessel operator is also presented.
\end{abstract}

\section{Introduction}
In this paper, we consider the following fractional differential equation involving hyper-Bessel operator with a source term $F$:
\begin{equation}
\,^{C}\left(t^{\theta}\dfrac{\partial}{\partial t}\right)^{\alpha}u(x,t)-u_{xx}(x,t)=F.
\end{equation}
We study both a direct problem where $F=f(x,t)$ is a known function of space and time and an inverse source problem where  $F=f(x)$ is an unknown function of space only.
Here $\,^{C}\left(t^{\theta}\dfrac{\partial}{\partial t}\right)^{\alpha}$ stands for a regularized Caputo-like counterpart hyper-Bessel operator of order $0<\alpha<1$ (see formula $(\ref{relation})$).
The hyper-Bessel operator was introduced by Dimovski in \cite{dim} and it arises in various problems, such as, fractional relaxation  \cite{garra} and fractional diffusion models \cite{garra1}. As an example, authors in \cite{garra1} used hyper-Bessel operator to describe heat diffusion for fractional Brownian motion. Their analysis based on converting fractional power of hyper-Bessel operator into Erd\'elyi-Kober (E-K) fractional integral. For more details about fractional Brownian motion, the reader is referred to  \cite{garra1, fbm}.\\
In fact, expressing hyper-Bessel operator in terms of Erd\'elyi-Kober fractional integral plays a key role in finding solution to fractional differential equations involving hyper-Bessel operator as illustrated in this paper as well. Some results related to hyper-Bessel operator are given in  \cite{garra},\cite{kiry2}.\\
In \cite{kiry}, AL-Saqabi and Kiryakova considered Volterra integral equation of second kind and a fractional differential equation, involving (E-K) integral or differential operator. They found explicit solutions to these equations using transmutation method which reduces solutions to known integral  solutions of Riemann-Liouville fractional equations.\\
The purpose of this paper is to prove existence and uniqueness of solution to a fractional diffusion equation involving a regularized Caputo-like counterpart hyper-Bessel operator considering both direct and inverse source problems.

\section{Preliminaries }
In this section, we recall some definitions and results related to fractional hyper-Bessel operator which will be used later in this paper. We start by writing down the definition of Erd\'elyi-Kober fractional integral.
\begin{definition}(see \cite{kiry, garra})
Erd\'elyi-Kober fractional integral of a function $f(t)$ with arbitrary parameters
$\delta>0$, $\gamma\in\mathrm{R}$ and $\beta >0$ is defined as
\begin{equation}
I_{\beta}^{\gamma,\delta}f(t)=\dfrac{t^{-\beta(\gamma+\delta)}}{\Gamma(\delta)}\int_{0}^{t}(t^{\beta}-\tau^{\beta})^{\delta-1} \tau^{\beta\gamma}f(\tau)\, d(\tau^{\beta}),
\end{equation}
which is reduced to the well-known Riemann-Liouville fractional integral when $\gamma= 0$ and $\beta =1$ with a power weight.\\
For $\delta<0,$ the interpretation is via integro-differential operator
$$I_{\beta}^{\gamma, \delta}f(t)=(\gamma+\delta+1)I_{\beta}^{\gamma,\delta+1}f(t)+\dfrac{1}{\beta}I_{\beta}^{\gamma,\delta+1} \left( t \dfrac{d}{dt}f(t)\right).$$
\end{definition}
In the following theorem, we present an explicit solution to an integral equation involving E-K fractional integral.
\begin{theorem}\label{eksol}(see \cite{kiry}, Theorem 1)
The unique solution $y(t)\in C_{\beta\mu},$ $\mu\geq \max\left\lbrace 0,-\gamma\right\rbrace-1$ to the following fractional  integral equation  of a second kind :
$$y(t)-\lambda t^{\beta \delta}I_{\beta}^{\gamma,\delta}y(t)=f(t),$$  or equivalently,
$$y(t)-\lambda t^{-\beta\gamma}\int_{0}^{t}\dfrac{(t^{\beta}-\tau^{\beta})^{\delta-1}}{\Gamma(\delta)}\tau^{\beta\gamma}y(\tau)\, d(\tau^{\beta})=f(t),$$
with $f\in C_{\beta\mu}$, has the explicit form of a convolutional type integral :
\begin{equation}y(t)=f(t)+\lambda t^{-\beta \gamma}\int_{0}^{t}(t^{\beta}-\tau^{\beta})^{\delta-1}E_{\delta,\delta}\left[ \lambda(t^{\beta}-\tau^{\beta})^{\delta}\right]\tau^{\beta\gamma}f(\tau)\, d(\tau^{\beta}). \end{equation}
\end{theorem}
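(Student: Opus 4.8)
The plan is to use the transmutation idea recalled in the Introduction: a single change of variable turns the Erd\'elyi--Kober equation of the second kind into an ordinary Riemann--Liouville fractional integral equation, which is solved explicitly by its resolvent (Mittag--Leffler) kernel, and the result is then carried back.

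First I would multiply the equation $y(t)-\lambda t^{\beta\delta}I_{\beta}^{\gamma,\delta}y(t)=f(t)$ by $t^{\beta\gamma}$ and substitute $T=t^{\beta}$, $S=\tau^{\beta}$. Setting $u(T)=t^{\beta\gamma}y(t)$ and $g(T)=t^{\beta\gamma}f(t)$, and using $d(\tau^{\beta})=dS$, the weighted Erd\'elyi--Kober kernel $t^{-\beta\gamma}(t^{\beta}-\tau^{\beta})^{\delta-1}\tau^{\beta\gamma}/\Gamma(\delta)$ collapses to the plain convolution kernel $(T-S)^{\delta-1}/\Gamma(\delta)$, so the equation becomes
\[ u(T)-\lambda\,J^{\delta}u(T)=g(T),\qquad J^{\delta}v(T):=\frac{1}{\Gamma(\delta)}\int_{0}^{T}(T-S)^{\delta-1}v(S)\,dS. \]
Here one must check that $y\mapsto u$ is an isomorphism of $C_{\beta\mu}$ onto the corresponding weighted space in the variable $T$ and that $J^{\delta}$ maps this space into itself; this is exactly where the hypothesis $\mu\geq\max\{0,-\gamma\}-1$ enters, guaranteeing that $g$ is integrable at the origin and that all operators stay inside the space.

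Next I would solve $u-\lambda J^{\delta}u=g$ by successive approximations. Since $(J^{\delta})^{n}=J^{n\delta}$ has kernel $S^{n\delta-1}/\Gamma(n\delta)$, the Neumann series $u=\sum_{n\ge 0}\lambda^{n}J^{n\delta}g$ converges in $C_{\beta\mu}$ (the factor $\Gamma(n\delta)$ in the denominator dominates), and summing the kernels term by term gives $\sum_{n\ge 1}\lambda^{n-1}(T-S)^{n\delta-1}/\Gamma(n\delta)=(T-S)^{\delta-1}E_{\delta,\delta}\!\left[\lambda(T-S)^{\delta}\right]$, whence
\[ u(T)=g(T)+\lambda\int_{0}^{T}(T-S)^{\delta-1}E_{\delta,\delta}\!\left[\lambda(T-S)^{\delta}\right]g(S)\,dS. \]
Uniqueness follows from the same estimates, since any difference $w$ of two solutions satisfies $w=\lambda^{n}J^{n\delta}w\to 0$. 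Undoing the substitution ($T\mapsto t^{\beta}$, $S\mapsto\tau^{\beta}$, $u(T)\mapsto t^{\beta\gamma}y(t)$, $g(S)\mapsto\tau^{\beta\gamma}f(\tau)$) and dividing by $t^{\beta\gamma}$ reproduces precisely the claimed convolutional formula.

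The step I expect to be the main obstacle is not the algebra of the reduction but the functional-analytic bookkeeping: verifying that the change of variable and each operator act within $C_{\beta\mu}$ under the stated restriction on $\mu$, and rigorously justifying the term-by-term summation of the Neumann series, i.e.\ the convergence of the Mittag--Leffler kernel representation together with the interchange of summation and integration.
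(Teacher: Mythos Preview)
The paper does not prove this theorem; it is quoted verbatim from \cite{kiry} (Theorem~1) as a preliminary tool, so there is no ``paper's own proof'' to compare against. That said, your approach is precisely the transmutation method the authors describe in the Introduction as the technique of \cite{kiry}: reduce the Erd\'elyi--Kober equation to a Riemann--Liouville Abel equation by the power substitution, solve the latter via its Mittag--Leffler resolvent, and transform back. Your algebra in the reduction and in summing the Neumann series is correct, and your identification of the functional-analytic bookkeeping (mapping properties in $C_{\beta\mu}$ under the condition $\mu\geq\max\{0,-\gamma\}-1$) as the only nontrivial step matches what one finds in the source paper.
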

Next, we use E-K integral to define the regularized counterpart hyper-Bessel operator.
\begin{definition}(see \cite{garra})
The hyper-Bessel operator of order $0<\alpha<1$ is defined in terms of E-K integral as follows
\begin{eqnarray}
\left(t^{\theta}\dfrac{d}{dt}\right)^{\alpha}f(t)=\left\lbrace \begin{array}{ll}
(1-\theta)^{\alpha}t^{-(1-\theta)\alpha} I_{1-\theta}^{0,-\alpha}f(t),& \text{ if } \theta<1,\\
(\theta-1)^{\alpha}I_{1-\theta}^{-1,-\alpha}t^{(1-\theta)\alpha}f(t),&\text{ if } \theta>1.
\end{array}\right.
\end{eqnarray}
\end{definition}
Note that when $\theta=0$, the hyper-Bessel operator coincides with Riemann-Liouville fractional derivative.\\
Now, recall that  Caputo and  Riemann-Liouville fractional derivatives of order $0<\alpha<1$ are defined as (see \cite{gm}): \[
\begin{array}{l}\displaystyle ^CD_{0|t}^{\alpha}f(t)=\dfrac{1}{\Gamma(1-\alpha)}\displaystyle\int_{0}^{t}\frac{f' (\tau)}{(t-\tau)^{\alpha}}\, d\tau,\\
 \displaystyle D_{0|t}^{\alpha}f(t)=\dfrac{1}{\Gamma(1-\alpha)}\displaystyle\dfrac{d}{dt}\int_{0}^{t}\frac{f (\tau)}{(t-\tau)^{\alpha}}\, d\tau,\end{array}\]
respectively, and they are related by (\cite{gm}):
$$
 \displaystyle ^CD_{0|t}^{\alpha}f(t)=D_{0|t}^{\alpha}(f(t)-f(0^{+})).
$$
Using the above relation, we can express the regularized Caputo-like counterpart hyper-Bessel operator as :
\begin{equation}\label{relation}
\,^{C}\left(t^{\theta}\dfrac{d}{dt}\right)^{\alpha}f(t)=\left(t^{\theta}\dfrac{d}{dt}\right)^{\alpha}f(t)-\dfrac{f(0)\,t^{-\alpha(1-\theta)}}{(1-\theta)^{-\alpha}\Gamma(1-\alpha)},
\end{equation}
and in terms of E-K fractional integral :
\begin{equation}\label{relation1}\,^{C}\left(t^{\theta}\dfrac{d}{dt}\right)^{\alpha}f(t)=(1-\theta)^{\alpha}t^{-\alpha(1-\theta)}I_{1-\theta}^{0,-\alpha} \left( f(t)-f(0) \right).\end{equation}
Also, we need to recall the Mittag-Leffler function of one parameter :
$$
E_{\alpha}(z)=\sum_{k=0}^{\infty}\dfrac{z^k}{\Gamma(\alpha k +1)},\quad\text{Re}(\alpha)>0,\,  z\in C,
$$
and the Mittag-Leffler of two parameters
$$
E_{\alpha,\beta^{*}}(z)=\sum_{k=0}^{\infty}\dfrac{z^k}{\Gamma(\alpha k +\beta^{*})},\quad\text{Re}(\alpha)>0,\text{ Re}(\beta^{*})>0,\,  z\in C.
$$
Now, we need the following results related to the  Mittag-Leffler function 
\begin{theorem}\label{intint}(see\cite{Prabhakar})
If $\text{Re}(\mu)>0$, $\text{Re}(\beta^*)>0,$ $\lambda$ is a complex number
and  $f(t)$ is an integrable function, then 
\begin{equation*}\label{simplified}
\begin{array}{ll}
\displaystyle\int_{a}^{x} (x-u)^{\beta^*-1}E_{\alpha,\beta^*}\left( \lambda(x-u)^{\alpha}\right) du& \displaystyle\int_{a}^{u} \dfrac{(u-t)^{\mu-1}}{\Gamma(\mu)}f(t)dt=\\&\displaystyle\int_{a}^{x}(x-t)^{\beta^*+\mu-1} E_{\alpha,\,\beta^*+\mu}\left( \lambda(x-t)^{\alpha}\right) f(t)dt. 
\end{array}
\end{equation*}
\end{theorem}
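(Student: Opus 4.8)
The plan is to prove the identity by expanding the two‑parameter Mittag‑Leffler function into its defining power series, interchanging summation with integration, applying Fubini's theorem to the resulting iterated integral, and then recognizing the inner integral as a Beta integral whose Gamma‑function factors combine to reassemble the right‑hand side. Throughout we use the standing hypotheses $\mathrm{Re}(\mu)>0$, $\mathrm{Re}(\beta^*)>0$ (and $\mathrm{Re}(\alpha)>0$, which is already required for $E_{\alpha,\beta^*}$ to be defined), together with $a<x$ and the integrability of $f$.

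First I would substitute $E_{\alpha,\beta^*}\!\left(\lambda(x-u)^{\alpha}\right)=\sum_{k=0}^{\infty}\frac{\lambda^{k}(x-u)^{\alpha k}}{\Gamma(\alpha k+\beta^*)}$ into the left‑hand side, so that it takes the form
\[
\sum_{k=0}^{\infty}\frac{\lambda^{k}}{\Gamma(\alpha k+\beta^*)\,\Gamma(\mu)}\int_{a}^{x}(x-u)^{\alpha k+\beta^*-1}\left(\int_{a}^{u}(u-t)^{\mu-1}f(t)\,dt\right)du .
\]
Since $E_{\alpha,\beta^*}$ is entire, this series converges uniformly for $u$ in the compact interval $[a,x]$, which together with the integrability of $f$ justifies pulling the summation outside the integral. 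Next I would apply Fubini's theorem on the triangle $\{a\le t\le u\le x\}$ to swap the $u$‑ and $t$‑integrations, obtaining
\[
\sum_{k=0}^{\infty}\frac{\lambda^{k}}{\Gamma(\alpha k+\beta^*)\,\Gamma(\mu)}\int_{a}^{x}f(t)\left(\int_{t}^{x}(x-u)^{\alpha k+\beta^*-1}(u-t)^{\mu-1}\,du\right)dt .
\]

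Then I would evaluate the inner integral via the substitution $u=t+s(x-t)$, $s\in[0,1]$, which turns it into $(x-t)^{\alpha k+\beta^*+\mu-1}B(\alpha k+\beta^*,\mu)=(x-t)^{\alpha k+\beta^*+\mu-1}\,\frac{\Gamma(\alpha k+\beta^*)\Gamma(\mu)}{\Gamma(\alpha k+\beta^*+\mu)}$. Plugging this back cancels the factors $\Gamma(\alpha k+\beta^*)$ and $\Gamma(\mu)$, leaving $\sum_{k=0}^{\infty}\frac{\lambda^{k}}{\Gamma(\alpha k+\beta^*+\mu)}\int_{a}^{x}(x-t)^{\alpha k+\beta^*+\mu-1}f(t)\,dt$. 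Finally I would factor out $(x-t)^{\beta^*+\mu-1}$ and the $t$‑integral from the sum and resum the series as $E_{\alpha,\beta^*+\mu}\!\left(\lambda(x-t)^{\alpha}\right)$, which is exactly the right‑hand side.

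The only genuinely delicate point is the justification of the two interchanges (summation with integration, and the order of the iterated integrals). I would dispose of both at once by observing that replacing $\lambda$, $f$ and every power by their absolute values produces a nonnegative integrand to which Tonelli's theorem applies; the resulting majorant is finite because, by the same computation run with absolute values, it equals $\int_{a}^{x}(x-t)^{\mathrm{Re}(\beta^*+\mu)-1}E_{\mathrm{Re}(\alpha),\,\mathrm{Re}(\beta^*+\mu)}\!\left(|\lambda|(x-t)^{\mathrm{Re}(\alpha)}\right)|f(t)|\,dt<\infty$, finiteness coming from the local integrability of $(x-t)^{\mathrm{Re}(\beta^*+\mu)-1}$ near $t=x$ and the boundedness of the Mittag‑Leffler factor on $[a,x]$. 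Once this is in place, the remaining steps are the elementary Beta‑integral manipulations described above.
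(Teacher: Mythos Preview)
Your argument is correct and is, in fact, the classical proof of this identity: expand $E_{\alpha,\beta^*}$ termwise, interchange the order of integration over the triangle $\{a\le t\le u\le x\}$, evaluate the resulting Euler Beta integral, and resum. The only quibble is your finiteness claim for the Tonelli majorant: if $f$ is merely $L^1$ and $\mathrm{Re}(\beta^*+\mu)<1$, the weight $(x-t)^{\mathrm{Re}(\beta^*+\mu)-1}$ is unbounded near $t=x$, so $\int_a^x (x-t)^{\mathrm{Re}(\beta^*+\mu)-1}|f(t)|\,dt$ need not be finite for \emph{every} $x$. It is, however, finite for almost every $x$ (this is the standard mapping property of the Riemann--Liouville integral on $L^1$), so the identity holds a.e.; and in any application where $f$ is locally bounded (as in the present paper) your justification goes through as written.

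As for comparison with the paper: there is nothing to compare. The paper does not prove this theorem; it is quoted from Prabhakar (reference~\cite{Prabhakar}) and used as a black box in the proof of Lemma~\ref{nonhomo FDE}. Your write-up supplies precisely the proof that Prabhakar's original paper gives, so you are not diverging from any argument in the present paper --- you are filling in a cited result.
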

\begin{theorem}(see\cite{pod}) Let $\alpha<2,$ $\beta^{*} \in R$ and $\dfrac{\pi\alpha}{2}<\mu < \min\left\lbrace \pi, \pi\alpha\right\rbrace.$ Then we have the following estimate
\begin{equation}\label{MLbound}
\left| E_{\alpha,\beta^{*}}(z)\right| \leq \dfrac{M}{ 1+|z|},\quad \mu \leq |\text{arg} z|\leq \pi,\,|z|\geq 0.\end{equation}
Here and in the rest of the paper, $M$ denotes a positive constant.
\end{theorem}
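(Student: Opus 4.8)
The statement is the classical Wiman–Dzhrbashyan asymptotic bound for the two-parameter Mittag-Leffler function, and the plan is to read it off from the large-argument asymptotics of $E_{\alpha,\beta^*}$ in a sector cut off from the positive real axis, combined with a compactness argument near the origin. The starting point is the Hankel-type contour representation
$$
E_{\alpha,\beta^*}(z)=\frac{1}{2\pi i}\int_{\mathcal{C}_{\mu'}}\frac{\zeta^{\alpha-\beta^*}\,e^{\zeta}}{\zeta^{\alpha}-z}\,d\zeta ,
$$
where $\mathcal{C}_{\mu'}$ is the loop consisting of the two rays $\arg\zeta=\pm\mu'$ (traversed inward and then outward) joined by the arc $|\zeta|=\varepsilon$, and $\mu'$ is chosen with $\pi\alpha/2<\mu'<\mu$. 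The hypothesis $\pi\alpha/2<\mu<\min\{\pi,\pi\alpha\}$ is exactly what makes such a $\mu'$ available, and for this choice the roots of $\zeta^{\alpha}=z$ lie off $\mathcal{C}_{\mu'}$ for every $z$ in the sector $\mu\le|\arg z|\le\pi$, so no residue (exponential) terms appear. The identity itself follows by expanding $1/(\zeta^{\alpha}-z)$ in a geometric series and integrating term by term against Hankel's formula $1/\Gamma(s)=\frac{1}{2\pi i}\int_{\mathcal{C}_{\mu'}}\zeta^{-s}e^{\zeta}\,d\zeta$.

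Next I would expand, inside the integral, $1/(\zeta^{\alpha}-z)=-\sum_{k=0}^{N-1}\zeta^{\alpha k}z^{-k-1}+\zeta^{\alpha N}z^{-N}/(\zeta^{\alpha}-z)$. The first $N$ terms integrate, again by Hankel's formula, to $-\sum_{k=1}^{N}z^{-k}/\Gamma(\beta^*-\alpha k)$, while the remainder integral is $O(|z|^{-N-1})$ as $|z|\to\infty$ uniformly for $\mu\le|\arg z|\le\pi$: here one uses $|\zeta^{\alpha}-z|\ge c\,|z|$ on $\mathcal{C}_{\mu'}$ (which holds because $\mu'<\mu$) together with the absolute convergence of $\int_{\mathcal{C}_{\mu'}}|\zeta|^{\alpha N+\alpha-\beta^*}e^{\operatorname{Re}\zeta}\,|d\zeta|$. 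This yields, for every $N\ge 1$,
$$
E_{\alpha,\beta^*}(z)=-\sum_{k=1}^{N}\frac{z^{-k}}{\Gamma(\beta^*-\alpha k)}+O\!\left(|z|^{-N-1}\right),\qquad |z|\to\infty,\ \mu\le|\arg z|\le\pi ,
$$
and taking $N=1$ gives constants $C_1,R_0>0$ with $|E_{\alpha,\beta^*}(z)|\le C_1/|z|$ whenever $|z|\ge R_0$ and $\mu\le|\arg z|\le\pi$.

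Finally, $E_{\alpha,\beta^*}$ is entire, hence continuous, so it is bounded by some $C_2$ on the compact disc $\{|z|\le R_0\}$. Putting $M:=\max\{C_1(1+1/R_0),\,C_2(1+R_0)\}$, one checks directly that $|E_{\alpha,\beta^*}(z)|\le M/(1+|z|)$ both for $|z|\ge R_0$ (since $C_1/|z|\le C_1(1+1/R_0)/(1+|z|)$) and for $|z|\le R_0$ (since $C_2\le C_2(1+R_0)/(1+|z|)$), which is the asserted estimate. The main obstacle is the second step — the contour deformation together with the uniform-in-$\arg z$ bound on the loop integral that isolates the $O(|z|^{-N-1})$ remainder; the rest is elementary bookkeeping. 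Since the estimate is quoted from \cite{pod}, in the paper itself it suffices to cite it, but the argument above is the standard route.
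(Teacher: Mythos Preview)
Your argument is correct and is precisely the standard route found in Podlubny's monograph (the cited reference \cite{pod}): the Hankel-contour representation, the finite geometric expansion of $(\zeta^{\alpha}-z)^{-1}$ yielding the asymptotic series $E_{\alpha,\beta^*}(z)=-\sum_{k=1}^{N}z^{-k}/\Gamma(\beta^*-\alpha k)+O(|z|^{-N-1})$ in the sector $\mu\le|\arg z|\le\pi$, and the patching with a compactness bound near the origin. The paper itself gives no proof at all --- the theorem is stated with the citation ``(see \cite{pod})'' and used as a black box --- so there is nothing to compare against; you have simply supplied the argument that the paper outsources, and you correctly note this yourself in your final sentence.
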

In the following theorem, we present a homogeneous fractional equation with regularized Caputo-like counterpart hyper-Bessel operator and its explicit solution as proved in \cite{garra}.
\begin{theorem}\label{homsol}( see \cite{garra}, Theorem 2.1)
The function
$$u(t)=E_{\alpha}\left( -\dfrac{\lambda t^{\alpha(1-\theta)}}{(1-\theta)^{\alpha}} \right), $$
solves the fractional Cauchy problem
\[\left\lbrace \begin{array}{rll}
\,^{C}\left(t^{\theta}\dfrac{d}{dt}\right)^{\alpha}u(t)&=-\lambda u(t), &\alpha\in(0,1),\;\theta<1,\; t\geq 0,\\
u(0)&=1.&
\end{array}\right. \]
\end{theorem}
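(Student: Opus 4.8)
\emph{Proof proposal.} The plan is to verify the claim by direct substitution, using the Erdélyi--Kober representation \eqref{relation1} of the regularized Caputo-like hyper-Bessel operator together with the classical action of the Erdélyi--Kober integral on power functions. Write $u(t)=E_{\alpha}\!\left(-c\,t^{\alpha(1-\theta)}\right)$ with $c=\lambda/(1-\theta)^{\alpha}$. Since $E_{\alpha}(0)=1$, the initial condition $u(0)=1$ is immediate, and term by term
\[
u(t)-u(0)=\sum_{k=1}^{\infty}\frac{(-c)^{k}}{\Gamma(\alpha k+1)}\,t^{\alpha k(1-\theta)} .
\]
By \eqref{relation1} it therefore remains to evaluate $(1-\theta)^{\alpha}t^{-\alpha(1-\theta)}I_{1-\theta}^{0,-\alpha}$ applied to this series.

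The first step is to recall (or to derive from the definition of $I_{\beta}^{\gamma,\delta}$ via the Beta integral when $\delta>0$, extending to $\delta<0$ through the integro-differential interpretation given in the definition) the Erdélyi--Kober power rule
\[
I_{\beta}^{\gamma,\delta}\,t^{\beta p}=\frac{\Gamma(\gamma+p+1)}{\Gamma(\gamma+p+\delta+1)}\,t^{\beta p},\qquad \gamma+p+1>0 .
\]
With $\beta=1-\theta$, $\gamma=0$, $\delta=-\alpha$ and $p=\alpha k$ this yields $I_{1-\theta}^{0,-\alpha}\,t^{\alpha k(1-\theta)}=\dfrac{\Gamma(\alpha k+1)}{\Gamma(\alpha k+1-\alpha)}\,t^{\alpha k(1-\theta)}$. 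The second step is to apply this termwise to the series for $u(t)-1$; after justifying the interchange of the operator with the summation one obtains, reindexing by $j=k-1$,
\[
I_{1-\theta}^{0,-\alpha}\bigl(u(t)-1\bigr)=\sum_{k=1}^{\infty}\frac{(-c)^{k}}{\Gamma(\alpha(k-1)+1)}\,t^{\alpha k(1-\theta)}=-c\,t^{\alpha(1-\theta)}\sum_{j=0}^{\infty}\frac{(-c)^{j}}{\Gamma(\alpha j+1)}\,t^{\alpha j(1-\theta)}=-c\,t^{\alpha(1-\theta)}u(t).
\]
Multiplying by $(1-\theta)^{\alpha}t^{-\alpha(1-\theta)}$ and using $c=\lambda/(1-\theta)^{\alpha}$ gives $\,^{C}\!\left(t^{\theta}\tfrac{d}{dt}\right)^{\alpha}u(t)=-\lambda u(t)$, which, together with $u(0)=1$, completes the verification.

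I expect the main obstacle to be the rigorous justification of the termwise action of the Erdélyi--Kober operator of \emph{negative} order $\delta=-\alpha$. Unlike the case $\delta>0$, which is a genuine fractional integral with an $L^{1}$ kernel on compact $t$-intervals, here the operator is defined through the integro-differential formula $I_{\beta}^{\gamma,\delta}f=(\gamma+\delta+1)I_{\beta}^{\gamma,\delta+1}f+\tfrac1\beta I_{\beta}^{\gamma,\delta+1}\!\left(t\tfrac{d}{dt}f\right)$, so one must differentiate the series as well. This is handled by noting that the power series defining $u$ is entire, hence converges uniformly on compact $t$-intervals along with its termwise derivative, so both the ordinary fractional integral $I_{1-\theta}^{0,1-\alpha}$ and the differentiation commute with the summation; one should also record the membership $u-1\in C_{(1-\theta)\mu}$ with an admissible $\mu$, so that the operator is well defined in the sense of the framework used above. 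Everything else reduces to the elementary bookkeeping of Gamma-function ratios displayed in the previous paragraph.
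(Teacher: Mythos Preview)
Your verification is correct: the termwise application of the Erd\'elyi--Kober power rule to the Mittag--Leffler series, together with the representation \eqref{relation1}, cleanly yields the equation, and your remarks on justifying termwise action of $I_{1-\theta}^{0,-\alpha}$ via the integro-differential extension are appropriate.

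The paper, however, does not prove this theorem directly; it is quoted from \cite{garra}. Within the paper the result is recovered only as the special case $f\equiv 0$, $u_0=1$ of Lemma~\ref{nonhomo FDE}, whose proof proceeds by a different route: the equation is first inverted, via $(I_{\rho}^{0,-\alpha})^{-1}=I_{\rho}^{-\alpha,\alpha}$, into a Volterra-type integral equation, and then the explicit resolvent formula of Al-Saqabi and Kiryakova (Theorem~\ref{eksol}) is applied, after which the resulting integrals are summed to the Mittag--Leffler function. Your approach is a direct substitution check and is shorter and more elementary for the homogeneous problem; the paper's approach is constructive and, crucially, extends without change to the non-homogeneous equation with an arbitrary forcing $f(t)$, which is what the paper actually needs for the subsequent initial--boundary value problems.
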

In this paper, we consider a more general case, namely, a non-homogeneous fractional differential equation with a regularized Caputo-like counterpart hyper-Bessel operator presented in the following lemma:
\begin{lemma} \label{nonhomo FDE}
Consider  the following non-homogeneous fractional differential equation
\begin{equation}\label{chp}
\,^{C}\left(t^{\theta}\dfrac{d}{dt}\right)^{\alpha}u(t)=-\lambda u(t)+f(t), \;\alpha\in(0,1),\;\theta<1,\; t\geq 0,
\end{equation}
with
$u(0)=u_0, $ where  $u_0 $ is a constant.
Then, its solution is given in the integral form
\begin{equation}\label{nonsol}
\begin{array}{ll}
u(t)&=u_0\; E_{\alpha}\left( \lambda^{*}t^{\rho\alpha}\right)+
\dfrac{1}{\rho^{\alpha}\Gamma(\alpha)}\displaystyle\int_{0}^{t}(t^{\rho}-x^{\rho})^{\alpha-1}f(x)\, d(x^{\rho})\\
&+\dfrac{\lambda^{*}}{\rho^{\alpha}}\displaystyle\int_{0}^{t}(t^{\rho}-x^{\rho})^{2\alpha-1}E_{\alpha,2\alpha}\left(\lambda^{*} (t^{\rho}-x^{\rho})^{\alpha}\right) f(x)\, d(x^{\rho}),
\end{array}
 \end{equation}
where, $\rho=1-\theta$ and $\lambda^{*}=-\dfrac{\lambda}{\rho^{\alpha}}.$
Moreover, if $f=f_0$ is constant, then the solution reduces to
$$u(t)= C^{*}\;E_{\alpha}\left(\lambda^{*} t^{\rho\alpha}\right)+\dfrac{f_0}{\lambda}.$$
where $C^{*}=\left(u_0-\dfrac{f_0}{\lambda} \right)$.
In particular, when $f=0$, we have
$$u(t)=u_0 \;E_{\alpha}\left(\lambda^{*} t^{\rho\alpha}\right).$$
\end{lemma}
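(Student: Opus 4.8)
The plan is to transform the Cauchy problem \eqref{chp} into an equivalent Volterra integral equation of the second kind of Erd\'elyi-Kober type and then invoke Theorem~\ref{eksol}. Put $\rho=1-\theta$. By the representation \eqref{relation1}, problem \eqref{chp} becomes
\begin{equation}\label{redform}
\rho^{\alpha}\,t^{-\rho\alpha}\,I_{\rho}^{0,-\alpha}\bigl(u(t)-u_{0}\bigr)=-\lambda\,u(t)+f(t).
\end{equation}
I would act on both sides with the operator $\mathcal{J}g(t):=\dfrac{t^{\rho\alpha}}{\rho^{\alpha}}\,I_{\rho}^{0,\alpha}g(t)=\dfrac{1}{\rho^{\alpha}\Gamma(\alpha)}\displaystyle\int_{0}^{t}(t^{\rho}-x^{\rho})^{\alpha-1}g(x)\,d(x^{\rho})$, which satisfies $\mathcal{J}\bigl(\,^{C}(t^{\theta}d/dt)^{\alpha}u\bigr)=u-u_{0}$. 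This identity follows from the index law $I_{\rho}^{\gamma+\delta_{2},\delta_{1}}I_{\rho}^{\gamma,\delta_{2}}=I_{\rho}^{\gamma,\delta_{1}+\delta_{2}}$ (giving $I_{\rho}^{-\alpha,\alpha}I_{\rho}^{0,-\alpha}=I_{\rho}^{0,0}=\mathrm{Id}$) together with the commutation of a power weight past an E-K operator, $I_{\rho}^{0,\alpha}\bigl[t^{-\rho\alpha}h(t)\bigr]=t^{-\rho\alpha}I_{\rho}^{-\alpha,\alpha}h(t)$; conversely $\,^{C}(t^{\theta}d/dt)^{\alpha}\mathcal{J}g=g$ because $\mathcal{J}g(0)=0$, so \eqref{chp} is genuinely equivalent to the equation obtained by applying $\mathcal{J}$. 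Doing so, and recalling $\lambda^{*}=-\lambda/\rho^{\alpha}$, gives
\begin{equation}\label{ieq}
u(t)-\lambda^{*}t^{\rho\alpha}I_{\rho}^{0,\alpha}u(t)=u_{0}+\dfrac{1}{\rho^{\alpha}\Gamma(\alpha)}\int_{0}^{t}(t^{\rho}-x^{\rho})^{\alpha-1}f(x)\,d(x^{\rho})=:g(t).
\end{equation}

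Equation \eqref{ieq} is precisely of the type treated in Theorem~\ref{eksol} with $\beta=\rho$, $\gamma=0$, $\delta=\alpha$ and parameter $\lambda^{*}$, and its right-hand side $g$ lies in the required class $C_{\rho\mu}$ (for continuous $f$ one may take $\mu=0$). Hence $u$ is uniquely determined and
$$u(t)=g(t)+\lambda^{*}\int_{0}^{t}(t^{\rho}-\tau^{\rho})^{\alpha-1}E_{\alpha,\alpha}\!\left[\lambda^{*}(t^{\rho}-\tau^{\rho})^{\alpha}\right]g(\tau)\,d(\tau^{\rho}).$$
Substituting $g=u_{0}+\mathcal{J}f$ splits $u$ into a $u_{0}$-part and an $f$-part. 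For the $u_{0}$-part I would compute $\int_{0}^{t}(t^{\rho}-\tau^{\rho})^{\alpha-1}E_{\alpha,\alpha}[\lambda^{*}(t^{\rho}-\tau^{\rho})^{\alpha}]\,d(\tau^{\rho})$ by the change of variable $s=t^{\rho}-\tau^{\rho}$, getting $t^{\rho\alpha}E_{\alpha,\alpha+1}(\lambda^{*}t^{\rho\alpha})$, and then apply the series identity $1+wE_{\alpha,\alpha+1}(w)=E_{\alpha}(w)$ to recover the first term $u_{0}E_{\alpha}(\lambda^{*}t^{\rho\alpha})$ of \eqref{nonsol}. The $f$-part is $\mathcal{J}f(t)$ — exactly the second term of \eqref{nonsol} — plus the iterated integral $\dfrac{\lambda^{*}}{\rho^{\alpha}}\displaystyle\int_{0}^{t}(t^{\rho}-\tau^{\rho})^{\alpha-1}E_{\alpha,\alpha}[\lambda^{*}(t^{\rho}-\tau^{\rho})^{\alpha}]\Bigl(\dfrac{1}{\Gamma(\alpha)}\displaystyle\int_{0}^{\tau}(\tau^{\rho}-x^{\rho})^{\alpha-1}f(x)\,d(x^{\rho})\Bigr)\,d(\tau^{\rho})$; under the substitutions $X=t^{\rho}$, $U=\tau^{\rho}$, $T=x^{\rho}$ this becomes the left-hand side in Theorem~\ref{intint} with $\beta^{*}=\mu=\alpha$, hence collapses to $\dfrac{\lambda^{*}}{\rho^{\alpha}}\displaystyle\int_{0}^{t}(t^{\rho}-x^{\rho})^{2\alpha-1}E_{\alpha,2\alpha}[\lambda^{*}(t^{\rho}-x^{\rho})^{\alpha}]f(x)\,d(x^{\rho})$, the third term of \eqref{nonsol}. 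This establishes \eqref{nonsol}.

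For the reduced statements I would simply evaluate the two $f$-integrals in \eqref{nonsol} at $f\equiv f_{0}$ using again $s=t^{\rho}-x^{\rho}$: they become $\dfrac{f_{0}t^{\rho\alpha}}{\rho^{\alpha}\Gamma(\alpha+1)}$ and $\dfrac{\lambda^{*}f_{0}t^{2\rho\alpha}}{\rho^{\alpha}}E_{\alpha,2\alpha+1}(\lambda^{*}t^{\rho\alpha})$. The Mittag-Leffler identities $\tfrac{1}{\Gamma(\alpha+1)}+wE_{\alpha,2\alpha+1}(w)=E_{\alpha,\alpha+1}(w)$ and $1+wE_{\alpha,\alpha+1}(w)=E_{\alpha}(w)$, together with $\lambda=-\lambda^{*}\rho^{\alpha}$, sum these to $\dfrac{f_{0}}{\lambda}\bigl(1-E_{\alpha}(\lambda^{*}t^{\rho\alpha})\bigr)$; adding $u_{0}E_{\alpha}(\lambda^{*}t^{\rho\alpha})$ yields $C^{*}E_{\alpha}(\lambda^{*}t^{\rho\alpha})+f_{0}/\lambda$ with $C^{*}=u_{0}-f_{0}/\lambda$, and $f_{0}=0$ leaves only $u_{0}E_{\alpha}(\lambda^{*}t^{\rho\alpha})$.

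The step I expect to be the main obstacle is the reduction to \eqref{ieq}: since $I_{\rho}^{0,-\alpha}$ is an integro-differential operator (negative E-K order), one has to justify carefully the index and commutation laws above, the interchange of these operators on the appropriate weighted continuity class, and the facts $\mathcal{J}g(0)=0$ and $g\in C_{\rho\mu}$ needed for Theorem~\ref{eksol} to apply. Once \eqref{ieq} is in hand the remainder is bookkeeping: two changes of variable, one application of Theorem~\ref{intint}, and the elementary Mittag-Leffler series identities used above.
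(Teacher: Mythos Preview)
Your proposal is correct and follows essentially the same route as the paper's own proof: reduce \eqref{chp} via \eqref{relation1} to a Volterra equation of Erd\'elyi-Kober type, invoke Theorem~\ref{eksol}, and then simplify the $f$-part with Theorem~\ref{intint} and the $u_0$-part via Mittag-Leffler series identities. The only cosmetic differences are that the paper obtains the integral equation by dividing \eqref{redform} through by $\rho^{\alpha}t^{-\rho\alpha}$ and then applying the inverse $(I_{\rho}^{0,-\alpha})^{-1}=I_{\rho}^{-\alpha,\alpha}$ (citing McBride) rather than composing with your $\mathcal{J}$, and it evaluates the $u_0$-term and the constant-$f$ case by expanding the series termwise instead of quoting the compact identities $1+wE_{\alpha,\alpha+1}(w)=E_{\alpha}(w)$ etc.
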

\begin{proof}
First, using relation $(\ref{relation1}),$ equation (\ref{chp}) can be written as
$$ \rho^{\alpha}t^{-\alpha\rho}I_{\rho}^{0,-\alpha} \left( u(t)-u_0 \right)=-\lambda u(t)+f(t),$$
which, on dividing by $\rho^{\alpha}t^{-\alpha\rho},$ becomes
$$I_{\rho}^{0,-\alpha} \left( u(t)-u_0 \right)=\lambda^{*} t^{\rho\alpha}u(t)+ \dfrac{t^{\rho\alpha}}{\rho^{\alpha}}f(t),$$
where $\lambda^{*}=-\dfrac{\lambda}{\rho^{\alpha}}.$ Using the following property of the inverse of  E-K integral (see\cite{mcbride}, Theorem 2.7): $$(I_{m}^{\eta,\alpha})^{-1}=I_{m}^{\eta+\alpha,-\alpha},$$
the above equation can be written as an integral equation, namely,
$$u(t)-\lambda^{*} I_{\rho}^{-\alpha,\alpha}\left( t^{\rho\alpha}u(t)\right)=u_0+\dfrac{1}{\rho^{\alpha}}I_{\rho}^{-\alpha,\alpha}\left( t^{\rho\alpha}f(t)\right),$$
or equivalently,
$$u(t)-\dfrac{\lambda^{*}}{\Gamma(\alpha)}\int_{0}^{t}(t^{\rho}-\tau^{\rho})^{\alpha-1}u(\tau) \, d(\tau^{\rho})=u_0+\dfrac{1}{\rho^{\alpha}\Gamma(\alpha)}\int_{0}^{t}(t^{\rho}-\tau^{\rho})^{\alpha-1}f(\tau)\, d(\tau^{\rho}).$$
Whereupon using Theorem $\ref{eksol}$, we have
\begin{equation}\label{intsol}
\begin{array}{ccc}
 u(t)&=&f^{*}(t)+\lambda^{*}\displaystyle\int_{0}^{t}(t^{\rho}-\tau^{\rho})^{\alpha-1}E_{\alpha,\alpha}(\lambda^{*}(t^{\rho}-\tau^{\rho})^{\alpha})f^{*}(\tau)\, d(\tau^\rho)\\
 &+& u_{0}\left(1+ \lambda^{*}\displaystyle\int_{0}^{t}(t^{\rho}-\tau^{\rho})^{\alpha-1}E_{\alpha,\alpha}(\lambda^{*}(t^{\rho}-\tau^{\rho})^{\alpha})\, d(\tau^\rho)\right),
 \end{array}
\end{equation}
where, $f^{*}(t)=\dfrac{1}{\rho^{\alpha}\Gamma(\alpha)}\displaystyle\int_{0}^{t}(t^{\rho}-x^{\rho})^{\alpha-1} f(x)\, d(x^{\rho}).$\\
The first integral in (\ref{intsol}) can be simplified using Theorem $\ref{intint}$ to the following: 
\begin{equation}\label{intf}
\dfrac{\lambda^{*}}{\rho^{\alpha}}\displaystyle\int_{0}^{t}(t^{\rho}-x^{\rho})^{2\alpha-1}E_{\alpha,2\alpha}\left(\lambda^{*} (t^{\rho}-x^{\rho})^{\alpha}\right) f(x)\, d(x^{\rho}),
\end{equation}
and the second integral in $(\ref{intsol})$ can be also simplified as follows:
\begin{equation}\label{u0}
\begin{array}{l}
u_{0}\left(1+ \lambda^{*}\displaystyle\int_{0}^{t}(t^{\rho}-\tau^{\rho})^{\alpha-1}E_{\alpha,\alpha}(\lambda^{*}(t^{\rho}-\tau^{\rho})^{\alpha})\, d(\tau^\rho)\right)\\
=u_{0}\left( 1+\displaystyle\sum_{k=0}^{\infty} \dfrac{(\lambda^{*})^{k+1}}{\Gamma(\alpha k+\alpha)}\int_{0}^{t}(t^{\rho}-\tau^{\rho})^{\alpha k+\alpha-1} \, d(\tau^{\rho})\right)\\=u_{0}\left( 1+\displaystyle\sum_{k=0}^{\infty} \dfrac{(\lambda^{*})^{k+1}}{\Gamma(\alpha( k+1)+1)}t^{\rho\,\alpha( k+1)}\right)\\
=u_{0}\left( 1+\displaystyle\sum_{m=1}^{\infty} \dfrac{(\lambda^{*})^{m}}{\Gamma(\alpha m+1)}t^{\rho\alpha m}\right)\\
=u_{0}\,E_{\alpha}(\lambda^{*}t^{\rho\alpha}).
\end{array}\end{equation}
Substituting the two simplified forms $(\ref{intf}) $ and $(\ref{u0})$  into $(\ref{intsol}),$  we get the  integral solution $(\ref{nonsol}).$\\
Now, if $ f(t)=f_0$ is constant, then evaluating the first integral in the expression $(\ref{nonsol})$ gives
 $$\dfrac{f_{0}}{\rho^{\alpha}\Gamma(\alpha)}\displaystyle\int_{0}^{t}(t^{\rho}-x^{\rho})^{\alpha-1} \, d(x^{\rho})=\dfrac{f_0 t^{\rho \alpha}}{\rho^{\alpha}\Gamma(\alpha+1)}.$$
Substituting back into $(\ref{nonsol})$ and proceeding in a similar way as in $(\ref{u0})$, the expression of $u(t)$ can be reduced to
$$u(t)=u_{0}E_{\alpha}(\lambda^{*}t^{\rho\alpha})-\dfrac{f_{0} }{\lambda}
\displaystyle\sum_{k=1}^{\infty}\dfrac{\lambda^{*k}\,t^{\rho\alpha k}}{\Gamma(\alpha k+1)},$$
 which can be rewritten as
$$u(t)=\dfrac{f_0}{\lambda}+ C^{*}\; E_{\alpha}\left(\lambda^{*} t^{\rho\alpha}\right), $$
where $C^{*}=\left(u_0-\dfrac{f_0}{\lambda} \right)$. Finally, if $f(t)=f_0=0$, the the expression of $u(x,t)$ can be further reduced to
$$u(t)= u_0\; E_{\alpha}\left(\lambda^{*} t^{\rho\alpha}\right), $$
which is consistent with Theorem \ref{homsol}.
\end{proof}
The rest of this paper is devoted for the main results. In the remaining two sections, we present existence and uniqueness results of solutions to  direct and inverse source problems involving a regularized Caputo-like counterpart hyper-Bessel operator.
\section{A Direct Problem}
\subsection{Statement of Problem and Main Result}
Find a function $u(x,t)$ in a domain $\Omega = \left\{ {0  < x < 1, \, 0 < t < T} \right\}$ satisfying
\begin{equation}\label{direct}
\,^{C}\left(t^{\theta}\dfrac{\partial}{\partial t}\right) ^{\alpha} u(x,t)-u_{xx}(x,t)=f(x,t), \quad (x,t)\in \Omega,
\end{equation}
the boundary conditions
\begin{equation} \label{dbcond}
u(0,t)=0,\quad u(1,t)=0,\quad 0\leq t\leq T,
\end{equation}
and the initial condition
\begin{equation}\label{Icond}
u(x,0)=\psi(x),\quad 0\leq x\leq 1,
\end{equation}
where $f(x,t)$ is a given function, $\theta<1$, $0<\alpha<1$ and $^C\left(t^{\theta}\dfrac{\partial}{\partial t }\right) ^{\alpha}$ is the regularized Caputo-like counterpart hyper-Bessel operator defined in $(\ref{relation})$. Our aim is to prove the existence and uniqueness of solution to the problem (\ref{direct}) - (\ref{Icond}) as stated in the following theorem:
\begin{theorem}
Assume that the following conditions hold
\begin{itemize}
\item $\psi(x)\in C[0,1]$ such that $\psi(0)=\psi(1)=0$ and  $\psi'(x)\in L^{2}(0,1),$
\item $f(\cdot,t)\in C^{3}[0,1]$ such that
$f(0,t)=f(1,t)=f_{xx}(0,t)=f_{xx}(1,t)=0,$ and  $\dfrac{\partial^4}{\partial x^4}
f(\cdot, x)\in L(0,1),$
\end{itemize}
then,  the problem $(\ref{direct})-(\ref{Icond})$ has a unique solution given by
$$
 u(x,t) =\sum_{k=1}^{\infty}  \left[  \psi_k\, E_{\alpha}\left( \dfrac{-k^2 \pi^2}{(1-\theta)^{\alpha}}t^{(1-\theta)\alpha}\right)+F_{k}(t)\right]\sin(k \pi x),$$
where,
$$
\begin{array}{rl}
F_{k}(t)=&\dfrac{1}{(1-\theta)^{\alpha}\Gamma(\alpha)}\displaystyle\int_{0}^{t}\left( t^{(1-\theta)}-\tau^{(1-\theta)}\right) ^{\alpha-1}f_{k}(\tau)\, d(\tau^{(1-\theta)})-\dfrac{k^2\pi^2}{(1-\theta)^{2\alpha}}\\
&\displaystyle\int_{0}^{t}\left( t^{(1-\theta)}-y^{(1-\theta)}\right) ^{2\alpha-1}
E_{\alpha,2\alpha}\left(-\dfrac{\lambda (t^{(1-\theta)}-y^{(1-\theta)})^{\alpha}}{(1-\theta)^{\alpha}}\right)f_{k}(y)\, d(y^{(1-\theta)}),
\end{array}$$
 $$\psi_k = 2 \int_{0}^{1} \psi(x) \sin(k\pi x ) \, dx, \quad k=1,2,3,\cdots$$
$$f_{k}(t)=2 \int_{0}^{1} f(x,t) \sin(k\pi x ) \, dx, \quad k=1,2,3,\cdots$$
\end{theorem}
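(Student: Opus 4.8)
The plan is to solve the problem by the Fourier method in $x$ and then to justify the resulting series by convergence estimates built on the Mittag-Leffler bound $(\ref{MLbound})$ and on integration by parts in $x$. The operator $-d^2/dx^2$ on $(0,1)$ with homogeneous Dirichlet conditions has the complete orthogonal system of eigenfunctions $\{\sin(k\pi x)\}_{k\ge 1}$ in $L^2(0,1)$ with eigenvalues $\lambda_k=k^2\pi^2$, so I would look for the solution in the form $u(x,t)=\sum_{k\ge 1}u_k(t)\sin(k\pi x)$, with $f(\cdot,t)=\sum_{k\ge1}f_k(t)\sin(k\pi x)$ and $\psi=\sum_{k\ge1}\psi_k\sin(k\pi x)$. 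Multiplying $(\ref{direct})$ by $2\sin(k\pi x)$, integrating over $(0,1)$, integrating by parts twice in the term containing $u_{xx}$ (the boundary terms vanishing by $(\ref{dbcond})$) and using that $\,^{C}(t^{\theta}\partial_t)^{\alpha}$ acts only in $t$, one obtains for each $k$ the scalar fractional Cauchy problem
\[
\,^{C}\left(t^{\theta}\dfrac{d}{dt}\right)^{\alpha}u_k(t)=-\lambda_k\,u_k(t)+f_k(t),\qquad u_k(0)=\psi_k .
\]
Applying Lemma~\ref{nonhomo FDE} with $\lambda=\lambda_k=k^2\pi^2$, $\rho=1-\theta$, $\lambda^{*}=-k^2\pi^2/(1-\theta)^{\alpha}$ yields $u_k(t)=\psi_k E_{\alpha}\big(-k^2\pi^2(1-\theta)^{-\alpha}t^{(1-\theta)\alpha}\big)+F_k(t)$, where $F_k(t)$ is precisely the pair of Erd\'elyi-Kober convolution integrals in the statement (the first being $f^{*}$ from $(\ref{intsol})$, the second its simplification $(\ref{intf})$). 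Substituting into the expansion of $u$ gives the candidate solution.

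For existence I would prove absolute and uniform convergence of this series, and of the series obtained by applying $-\partial_{xx}$ and $\,^{C}(t^{\theta}\partial_t)^{\alpha}$ term by term. The decay of the coefficients comes from the hypotheses: integrating $\psi_k=2\int_0^1\psi\sin(k\pi x)\,dx$ by parts once and using $\psi(0)=\psi(1)=0$ gives $\psi_k=\tfrac{2}{k\pi}\int_0^1\psi'(x)\cos(k\pi x)\,dx$, so $k|\psi_k|\le a_k$ with $\sum_k a_k^2\le\pi^{-2}\|\psi'\|_{L^2(0,1)}^2<\infty$ by Bessel's inequality; integrating $f_k(t)$ by parts four times and using $f(0,t)=f(1,t)=f_{xx}(0,t)=f_{xx}(1,t)=0$ gives $|f_k(t)|\le M k^{-4}\|\partial_x^4 f(\cdot,t)\|_{L^1(0,1)}$. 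Combining these with $(\ref{MLbound})$ in the forms $\big|E_{\alpha}(-c k^2 t^{(1-\theta)\alpha})\big|\le M/(1+c k^2 t^{(1-\theta)\alpha})$ and $|E_{\alpha,2\alpha}(z)|\le M/(1+|z|)$, together with elementary estimates of the Erd\'elyi-Kober integrals in $F_k$ (and Theorem~\ref{intint} to recast them), one shows: (i) $\sum_k|u_k(t)|<\infty$ uniformly on $[0,T]$, so the series for $u$ converges absolutely and uniformly on $\overline\Omega$ and $u\in C(\overline\Omega)$; (ii) for every $\delta\in(0,T]$ the series $\sum_k\lambda_k u_k(t)\sin(k\pi x)$ and the series obtained by applying $\,^{C}(t^{\theta}\partial_t)^{\alpha}$ termwise converge uniformly on $[0,1]\times[\delta,T]$, because for $t\ge\delta$ the relevant Mittag-Leffler factors decay like $\lambda_k^{-1}$, which, together with the coefficient decay, absorbs the factor $\lambda_k$ produced by $\partial_{xx}$. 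This legitimizes termwise application of $-\partial_{xx}$ and of the hyper-Bessel operator on $\Omega$; since each $u_k$ solves the scalar equation (Theorem~\ref{homsol} for the $\psi_k$ part, Lemma~\ref{nonhomo FDE} for $F_k$), the candidate $u$ satisfies $(\ref{direct})$ in $\Omega$. It satisfies $(\ref{dbcond})$ since each term vanishes at $x=0,1$, and $(\ref{Icond})$ since $E_{\alpha}(0)=1$ and $F_k(0)=0$, whence $u(x,0)=\sum_k\psi_k\sin(k\pi x)=\psi(x)$.

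For uniqueness, let $u^{(1)},u^{(2)}$ be two solutions, set $w=u^{(1)}-u^{(2)}$ and $w_k(t)=2\int_0^1 w(x,t)\sin(k\pi x)\,dx$. Commuting $\,^{C}(t^{\theta}\partial_t)^{\alpha}$ with the $x$-integral and integrating by parts twice (using the homogeneous boundary conditions) shows that $w_k$ solves $\,^{C}(t^{\theta}d/dt)^{\alpha}w_k=-\lambda_k w_k$ with $w_k(0)=0$; by the homogeneous case of Lemma~\ref{nonhomo FDE} (equivalently Theorem~\ref{homsol}) one gets $w_k\equiv0$ for every $k$. Completeness of $\{\sin(k\pi x)\}$ in $L^2(0,1)$ then forces $w(\cdot,t)=0$ in $L^2(0,1)$ for each $t$, and continuity of $w$ gives $w\equiv 0$ on $\overline\Omega$.

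The main obstacle is step~(ii) near $t=0$: the bound $(\ref{MLbound})$ gives $\big|E_{\alpha}(-c\lambda_k t^{(1-\theta)\alpha})\big|=O\big(\lambda_k^{-1}t^{-(1-\theta)\alpha}\big)$, whose constant blows up as $t\to 0^{+}$, so the series for $u_{xx}$ and for $\,^{C}(t^{\theta}\partial_t)^{\alpha}u$ cannot be expected to converge uniformly up to $t=0$; one must settle for uniform convergence on $[0,1]\times[\delta,T]$ for every $\delta>0$ and recover continuity of $u$ and attainment of the initial datum separately from (i). A secondary technical point is justifying that the integro-differential hyper-Bessel operator may be interchanged with the infinite sum at all, which requires the uniform convergence in (ii) together with control of the $t\tfrac{d}{dt}$-derivatives implicit in the E-K representation $(\ref{relation1})$, handled again via $(\ref{MLbound})$, Theorem~\ref{intint} and the $k^{-4}$ decay of $f_k$.
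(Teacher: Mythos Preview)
Your proposal is correct and follows essentially the same route as the paper: Fourier expansion in the Dirichlet sine basis, solution of the resulting scalar fractional ODE via Lemma~\ref{nonhomo FDE}, convergence of the series for $u$, $u_{xx}$ and $\,^{C}(t^{\theta}\partial_t)^{\alpha}u$ via integration-by-parts decay of $\psi_k$ and $f_k$ combined with the Mittag-Leffler bound $(\ref{MLbound})$, and uniqueness by testing the difference of two solutions against $\sin(k\pi x)$ and invoking completeness. Your explicit caution about $t\to 0^{+}$ is in fact well placed: the paper asserts a $t$-independent Weierstrass majorant for the $u_{xx}$-series, but its own bound on the $\psi_k$-term tacitly carries a factor $t^{-(1-\theta)\alpha}$, so your restriction of uniform convergence to $[0,1]\times[\delta,T]$ for each $\delta>0$ is the more honest formulation of the same argument.
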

\subsection{Proof of Result}
\subsubsection{Existence of Solution}
Using separation of variables method for solving the homogeneous equation corresponding to (\ref{direct}) along with the homogeneous boundary conditions  (\ref{dbcond})  yields the following spectral problem:
\begin{eqnarray}\label{sepr}
 \left\lbrace \begin{array}{l}
X''+\lambda X=0,\\
X(0)=0,\quad X(1)=0.
\end{array}\right.
\end{eqnarray}
It is known that the above problem is self adjoint and has the following eigenvalues  $$\lambda_k=(k\pi)^2,\quad k=1,2,3,\cdots$$
and the corresponding eigenfunctions are
\begin{equation} \label{sys1}
X_{k}=\sin(k \pi x) \quad k=1,2,3,\cdots.
\end{equation}
Using the fact that the system of eigenfunctions (\ref{sys1}) forms an orthogonal basis in $L^{2}(0,1)$\cite{Moiseev}, we can write the solution $u(x,t)$ in the form of a series expansion as follows:
\begin{equation}\label{solu}
u(x,t)=\sum_{k=1}^{\infty} u_k(t) \sin(k \pi x), 
\end{equation}
and 
\begin{equation}\label{souf}
f(x,t)=\sum_{k=1}^{\infty} f_k(t) \sin(k \pi x), 
\end{equation}
where, $u_{k}(t)$ is the unknown to be determined and $f_{k}(t)$ is known and given by $$f_{k}(t)=2\int_{0}^{1} f(x,t)\sin(k\pi x)dx.$$
Substituting $(\ref{solu})$ and $(\ref{souf})$ into $(\ref{direct})$ and $(\ref{Icond})$, we get  the linear  fractional differential equation
\begin{equation}\label{fde}
 \,^{C}\left(t^{\theta}\dfrac{d}{dt}\right) ^{\alpha} u_k(t)+ k^2 \pi^2 u_k(t)=f_{k}(t),\end{equation}
with the initial condition
$$u_k(0)=\psi_k,$$
where, $\psi_k$ is the coefficient of the series expansion of $\psi(x)$ in terms of the orthogonal basis (\ref{sys1}), i.e., $$\psi_k =2\int_{0}^{1} \psi(x) \sin(k\pi x ) \, dx.$$
Whereupon using Lemma \ref{nonhomo FDE}, the solution of equation $(\ref{fde})$ is given by
$$
u_{k}(t)=\psi_k \,E_{\alpha}\left( \dfrac{-k^2 \pi^2}{\rho^{\alpha}}t^{\rho\alpha}\right)+ F_{k}(t),
$$ 
where, $\rho=1-\theta$ and 
$$
\begin{array}{rl}
F_{k}(t)=&\dfrac{1}{\rho^{\alpha}\Gamma(\alpha)}\displaystyle\int_{0}^{t}\left( t^{\rho}-\tau^{\rho}\right) ^{\alpha-1}f_{k}(\tau)\, d(\tau^{\rho})\\
-&\dfrac{k^2\pi^2}{\rho^{2\alpha}}\displaystyle\int_{0}^{t}\left( t^{\rho}-y^{\rho}\right) ^{2\alpha-1}E_{\alpha,2\alpha}\left(-\dfrac{\lambda }{\rho^{\alpha}}(t^{\rho}-y^{\rho})^{\alpha}\right)f_{k}(y)\, d(y^{\rho}).
\end{array}$$
Consequently, the expression of $u(x,t)$ can be written as
\begin{equation}
u(x,t) =\sum_{k=1}^{\infty}  \left(  \psi_k\, E_{\alpha}\left( \dfrac{-k^2 \pi^2}{\rho^{\alpha}}t^{\rho\alpha}\right)+F_{k}(t)\right) \sin(k \pi x).
\end{equation}
To complete the proof of existence, we need to prove the uniform convergence of the series representations of $$u(x,t),\,^{C}\left(t^{\theta}\dfrac{\partial}{\partial t }\right) ^{\alpha} u(x,t),\,u_{x}(x,t),\,u_{xx}(x,t).$$
We start with the series representation of $u(x,t)$, rewriting $F_{k}(t)$  as  follows
$$
\begin{array}{rl}
F_{k}(t)=&\dfrac{1}{k^2\pi^2\rho^{\alpha}\Gamma(\alpha)}\displaystyle\int_{0}^{t}\left( t^{\rho}-\tau^{\rho}\right) ^{\alpha-1}f''_{k}(\tau)\, d(\tau^{\rho})\\
-&\dfrac{1}{\rho^{2\alpha}}\displaystyle\int_{0}^{t}\left( t^{\rho}-y^{\rho}\right) ^{2\alpha-1}E_{\alpha,2\alpha}\left(-\dfrac{k^2\pi^2 }{\rho^{\alpha}}(t^{\rho}-y^{\rho})^{\alpha}\right) f''_{k}(y)\, d(y^{\rho}),
\end{array}$$
where,
 $$f''_{k}(t)=2\int_{0}^{1}f_{xx}(x,t)\sin(k\pi x)dx.$$
Now, we estimate the Mittag-Leffler function using inequality $(\ref{MLbound})$ : 
$$\left|  E_{\alpha,2\alpha}\left(-\dfrac{k^2\pi^2 }{\rho^{\alpha}}(t^{\rho}-y^{\rho})^{\alpha}\right)\right|\leq \dfrac{ M\rho^{\alpha}}{\rho^{\alpha}+k^2\pi^2 |t^{\rho}-y^{\rho}|^{\alpha}},$$
which implies the following estimate for $u(x,t)$:
\[\begin{array}{ll}
|u(x,t)|&\leq M\displaystyle \sum_{k=1}^{\infty} \left(  \dfrac{ | \psi_k\,|}{\rho^{\alpha}+k^2\pi^2 |t^{\rho}-y^{\rho}|^{\alpha}}+\dfrac{1}{k^2\pi^2}\displaystyle\int_{0}^{t}\left\vert t^{\rho}-\tau^{\rho}\right\vert ^{\alpha-1}\vert f''_{k}(\tau)\vert d(\tau^{\rho})\right. \\ &\qquad\left. +\displaystyle\int_{0}^{t}\dfrac{ \left\vert t^{\rho}-y^{\rho}\right\vert ^{2\alpha-1}}{\rho^{\alpha}+k^2\pi^2 |t^{\rho}-y^{\rho}|^{\alpha}} \vert f''_{k}(y)\vert\, d(y^{\rho})\right) .
\end{array}\]
Since $\psi(x) \in C[0,1]$ and $f(\cdot, t) \in C^3[0,1]$ , then the above series converges and hence, by Weierstrass M-test the series representation of $u(x,t)$ is  uniformly convergent in $\Omega$.\\
Next, we show the uniform convergence of  series representation of $u_{xx}(x,t)$, which is given by
$$
u_{xx}(x,t) =-\sum_{k=1}^{\infty}k^2\pi^2  \left(  \psi_k\, E_{\alpha}\left( \dfrac{-k^2 \pi^2}{\rho^{\alpha}}t^{\rho\alpha}\right)+F_{k}(t)\right) \sin(k \pi x).$$
To prove this assertion, we have the following estimate
\[\begin{array}{ll}
|u_{xx}(x,t)|&\leq M\displaystyle \sum_{k=1}^{\infty} \left(  \dfrac{ k^2\pi^2| \psi_k\,|}{\rho^{\alpha}+k^2\pi^2 (t^{\rho}-y^{\rho})^{\alpha}}+\dfrac{1}{k^2\pi^2}\displaystyle\int_{0}^{t}\left\vert t^{\rho}-\tau^{\rho}\right\vert ^{\alpha-1}\vert f^{(4)}_{k}(\tau)\vert d(\tau^{\rho})\right. \\ &\qquad\left. +\displaystyle\int_{0}^{t}\dfrac{ \left\vert t^{\rho}-y^{\rho}\right\vert ^{2\alpha-1}}{\rho^{\alpha}+k^2\pi^2 (t^{\rho}-y^{\rho})^{\alpha}} \vert f^{(4)}_{k}(y)\vert\, d(y^{\rho})\right),\end{array}\]
where
$$f_{k}^{(4)}(t)=2\int_{0}^{1}\dfrac{\partial^4}{dx^{4}}f(x,t) \sin(k\pi x) \,dx.$$
Since $\psi(0)=\psi(1)=0$ and $ \dfrac{\partial^4 f}{\partial x^4}(\cdot,t) \in L(0,1)$, then using integration by parts, we arrive at the following estimate
\[\begin{array}{ll}
|u_{xx}(x,t)|&\leq M \displaystyle\sum_{k=1}^{\infty} \left(  \dfrac{1}{k\pi}\left|\psi_{k}^{(1)}\right|+\dfrac{1}{k^2\pi^2} \right)\\ 
&\leq M\left(  \displaystyle\sum_{k=1}^{\infty} \dfrac{1}{(k\pi)^2}+\displaystyle\sum_{k=1}^{\infty} \left|\psi_{k}^{(1)} \right|^2 \right), 
\end{array}\]
where, we have used the inequality $2ab \le a^2 + b^2$ and 
$$
\psi_{k}^{(1)}=2\displaystyle\int_{0}^{1}\psi'(x) \cos(k\pi x) \,dx.$$
Then, Bessel's inequality for trigonometric functions $$ \sum_{k=0}^{\infty} g_{k}^2\leq \Vert g\Vert_{L^2(0,1)}^2,$$ implies
\[\begin{array}{ll}
|u_{xx}(x,t)|&\leq M  
 \displaystyle\sum_{k=1}^{\infty} \dfrac{1}{(k\pi)^2}+\Vert\psi'(x)\Vert_{L^2(0,1)}^2. 
\end{array}\]
Thus, the series in expression of $ u_{xx}(x,t)$ is bounded by a convergent series which implies that its uniformly convergent by Weierstrass M-test.
Finally, series representation of 
 $\,^{C}\left(t^{\theta}\dfrac{\partial}{\partial t}\right) ^{\alpha}u(x,t)$ is given by
$$
\begin{array}{ll}
\,^{C}\left(t^{\theta}\dfrac{\partial}{\partial t}\right) ^{\alpha}u(x,t)&=-\displaystyle\sum_{k=1}^{\infty}k^2 \pi^2\left(  \psi_k  E_{\alpha}\left( \dfrac{-k^2 \pi^2 }{\rho^{\alpha}}+ t^{\rho\alpha}\right)+F_{k}(t)\right)   \sin(k \pi x)
+\,f(x,t),
\end{array}
$$
and convergence of the above series follows directly from the uniform convergence of  $u_{xx}(x,t),$
which also ensures the uniform convergence of $u_{x}(x,t).$ 
\subsubsection{Uniqueness of Solution:}
Suppose that  $u_1(x,t)$ and  $u_2(x,t)$ are two solutions of the problem (\ref{direct}) - (\ref{Icond}), then $\widehat{u}(x,t)= u_1(x,t)-  u_2(x,t)$  satisfies the following boundary value problem:
\begin{eqnarray}\label{eq} 
&\,^{C}\left(t^{\theta}\dfrac{\partial}{\partial t }\right) ^{\alpha} \widehat{u}-\dfrac{\partial^2 \widehat{u}}{\partial x^2}=0,&(x,t)\in \Omega,
\\
\label{condb}
&\widehat{u}(0,t)=0,\quad \widehat{u}(1,t)=0, &0\leq t \leq T,\\
\label{condi}
&\widehat{u}(x,0)=0, &0\leq x \leq 1.
\end{eqnarray}
Define the following  function:
\begin{equation}\label{un}
u_{k}(t)=2\int_{0}^{1} \widehat{u}(x,t) \sin(k\pi x)dx.\end{equation}
Then, the  initial condition $(\ref{condi})$  implies  
\begin{equation}\label{newicond}
u_{k}(0)=0.\end{equation}
 Applying regularized Caputo-like counterpart hyper-Bessel operator to $(\ref{un})$, we get
$$
\begin{array}{ll}
\,^{C}\left(t^{\theta}\dfrac{d}{d t }\right) ^{\alpha} u_{k}(t)&=2\displaystyle\int_{0}^{1}\,^{C}\left(t^{\theta}\dfrac{\partial}{\partial t }\right) ^{\alpha} \widehat{u}(x,t)\sin(k\pi x)dx,\\
&=2\displaystyle\int_{0}^{1}  \widehat{u}_{xx}(x,t)\sin(k\pi x)dx.\\
\end{array} $$ 
Then, integrating by parts twice and using boundary conditions $(\ref{condb})$, we obtain the following fractional differential equation
$$\,^{C}\left(t^{\theta}\dfrac{d}{d t }\right) ^{\alpha}u_{k}(t)-(k\pi)^2 u_{k}(t)=0.$$
Using Lemma $\ref{nonhomo FDE}$, the above equation with the initial condition $(\ref{newicond})$ has the trivial solution  $u_{k}(t)\equiv 0,$
and hence we have  $$\int_{0}^{1}\widehat{u}(x,t)\sin(k\pi x)dx=0.$$
Therefore,  using the completeness property of system $(\ref{sys1})$, we deduce that
$\widehat{u}(x,t)=0$ in $\Omega$, which implies the uniqueness of solution to the problem (\ref{direct}) - (\ref{Icond}).
 
\section{Inverse source problem}
 Here, we consider an inverse source problem of finding a pair of functions $\left\lbrace u(x,t), f(x) \right\rbrace $ in a  rectangular domain $\Omega=\left\lbrace (x,t):0<x<1,\; 0<t<T\right\rbrace,$ which satisfies the following  initial-boundary value problem:
\begin{eqnarray}
&&\,^{C}\left(t^{\theta}\dfrac{\partial}{\partial t}\right) ^{\alpha} u(x,t)-u_{xx}(x,t)=f(x), \quad \quad (x,t) \in \Omega \label{sinverse}\\
&&u(0,t)=0,\quad u(1,t)=0,\hspace{3cm} 0\leq t\leq T,\label{ISBC}\\
&&u(x,0)=\psi(x),\quad u(x,T)=\phi(x), \hspace{1.5cm} 0\leq x\leq 1, \label{ISIC}
\end{eqnarray}
where $\phi$  and $\psi$ are given functions, such that
\[
\psi(0)=\psi(1)=0, \quad \phi(0)=\phi(1)=0,
\]
which follows directly from (\ref{ISBC}) and (\ref{ISIC}). As in the previous section, we seek solution to problem (\ref{sinverse}) - (\ref{ISIC}) in a form of series expansions using the orthogonal system (\ref{sys1}) as follows:
$$
u(x,t)=\sum_{k=1}^{\infty} u_k(t) \sin(k \pi x),
$$
$$
f(x)=\sum_{k=1}^{\infty} f_k \sin(k \pi x).
$$
where $f_k$, $u_k$ are the unknowns to be determined.
Substituting the above expressions for $u(x,t)$ and $f(x)$ into (\ref{sinverse}) and (\ref{ISIC}) gives the following  fractional differential equation:
$$ \,^{C}\left(t^{\theta}\dfrac{d}{dt}\right) ^{\alpha}u_k(t)+k^2 \pi^2    u_k(t)=f_k,$$
with the following conditions
$$u_k(0)=\psi_k,\quad u_{k}(T)=\phi_k,$$
where $\psi_k, \phi_k$ are called the Fourier sine coefficients and defined as $$\psi _k=2\int_{0}^{1}\psi (x)\sin(k\pi x) dx, \quad \phi_k=2\int_{0}^{1}\phi(x)\sin(k\pi x) dx.$$
Solving the above equation, using Lemma $2.7$, we obtain
$$u_k(t)= C_k \;E_{\alpha}\left(-\dfrac{k^2\pi^2}{(1-\theta)^{\alpha}} t^{(1-\theta)\alpha}\right)+\dfrac{f_k}{k^2\pi^2},$$
and using the given initial conditions,  we have
$$C_k=\dfrac{\psi_k-\phi_k}{1-E_{\alpha}\left(-\dfrac{k^2\pi^2}{(1-\theta)^{\alpha}} T^{(1-\theta)\alpha}\right)},\qquad f_k=k^2\pi^2(\psi_k - C_k).$$
Hence, the expressions for $u(x,t)$ and $f(x)$ can be written as,
\[\begin{array}{ll}
u(x,t)&=\displaystyle\sum_{k=1}^{\infty} C_k E_{\alpha}\left(-\dfrac{k^2\pi^2}{(1-\theta)^{\alpha}} t^{(1-\theta)\alpha}\right)\sin(k \pi x)+ (\psi_k - C_k)\sin(k \pi x) \\
&=\psi(x)-\displaystyle\sum_{k=1}^{\infty}\dfrac{1-E_{\alpha}\left(-\dfrac{k^2\pi^2}{(1-\theta)^{\alpha}} t^{(1-\theta)\alpha}\right)}{1-E_{\alpha}\left(-\dfrac{k^2\pi^2}{(1-\theta)^{\alpha}} T^{(1-\theta)\alpha}\right)}(\psi_k-\phi_k)\sin(k \pi x),\end{array}\]
and
\[\begin{array}{ll}
f(x)&=\displaystyle \sum_{k=1}^{\infty}k^2\pi^2 (\psi_k-C_k)\sin(k \pi x)\\
&=\psi''(x)-\displaystyle\sum_{k=1}^{\infty}\dfrac{k^2\pi^2 (\psi_k-\phi_k)}{1-E_{\alpha}\left(-\dfrac{k^2\pi^2}{(1-\theta)^{\alpha}} T^{(1-\theta)\alpha}\right)}\sin(k \pi x).\end{array}\]
Appropriate conditions on the given functions $\psi(x)$ and $\phi(x)$, see the Theorem $4.1$ below, are assumed for establishing the uniform convergence of the series expansions of  $u(x,t),$ $\,^{C}\left(t^{\theta}\dfrac{\partial}{\partial t}\right)^{\alpha}u(x,t),$ $\,u_{x}(x,t)$, $u_{xx}(x,t)$ and  $f(x).$ This can be done in a similar approach as presented earlier. For example,  for $f(x)$ we have the following estimate: 
\[\begin{array}{ll}
|f(x)|&\leq |\psi''(x)|+ \displaystyle\sum_{k=1}^{\infty}\dfrac{ k^2\pi^2 \left[ (1-\theta)^{\alpha}+k^2\pi^2 T^{ (1-\theta)\alpha}\right] }{(1-M)(1-\theta)^{\alpha}+k^2\pi^2 T^{ (1-\theta)\alpha}}\left( \left|\psi_{k}\right|+\left|\phi_{k}\right|\right)  \\
&\leq |\psi''(x)|+M \displaystyle\sum_{k=1}^{\infty}\dfrac{ 1}{k\pi}\left( \left|\psi_{k}^{(3)}\right|+\left|\phi_{k}^{(3)}\right|\right)  \\
&\leq |\psi''(x)|+ M \left(\displaystyle \sum_{k=1}^{\infty}\dfrac{1}{(k\pi)^2}+\displaystyle \sum_{k=1}^{\infty}\left|\psi_{k}^{(3)}\right|^2+ \sum_{k=1}^{\infty}\left|\phi_{k}^{(3)}\right|^2\right) \\
&\leq |\psi''(x)|+M\left(\displaystyle \sum_{k=1}^{\infty}\dfrac{1}{(k\pi)^2}+\Vert\psi'''(x)\Vert_{L^2(0,1)}^2+\Vert\phi'''(x)\Vert_{L^2(0,1)}^2\right),
\end{array}\]
where
$$
\psi_{k}^{(3)}=2\displaystyle\int_{0}^{1}\psi'''(x) \cos(k\pi x) \,dx,$$
and $$
\phi_{k}^{(3)}=2\displaystyle\int_{0}^{1}\phi'''(x) \cos(k\pi x) \,dx.$$
 Assuming that $\psi(x)\in C^{2}[0,1]$ and $ \psi'''(x),\phi'''(x)\in L^{2}(0,1),$ then by Weierstrass M-test the series representation of $f(x)$ is uniformly convergent.
Also, the series representation of $\,^{C}\left(t^{\theta}\dfrac{\partial}{\partial t}\right)^{\alpha}u(x,t)$, which is given by 
$$
\,^{C}\left(t^{\theta}\dfrac{\partial}{\partial t}\right)^{\alpha}u(x,t)
=-\displaystyle\sum_{k=1}^{\infty}\dfrac{E_{\alpha}\left(-\dfrac{k^2\pi^2}{(1-\theta)^{\alpha}} t^{(1-\theta)\alpha}\right)}{1-E_{\alpha}\left(-\dfrac{k^2\pi^2}{(1-\theta)^{\alpha}} T^{(1-\theta)\alpha}\right)}k^2\pi^2(\psi_k-\phi_k)\sin(k \pi x),
 $$
can be estimated as follows:
\[\begin{array}{ll}
\left|\,^{C}\left(t^{\theta}\dfrac{\partial}{\partial t}\right)^{\alpha}u(x,t)\right|
&\leq M\displaystyle\sum_{k=1}^{\infty}\dfrac{ k^2\pi^2}{ (1-\theta)^{\alpha}+k^2\pi^2 t^{ (1-\theta)\alpha}  }(\left|\psi_k\right|+\left|\phi_k\right|)\\
&\leq M\displaystyle\sum_{k=1}^{\infty}\dfrac{1}{k\pi}\left( \left|\psi_k^{(1)}\right|+\left|\phi_k^{(1)}\right|\right)\\
&\leq M\left(\displaystyle \sum_{k=1}^{\infty}\dfrac{1}{(k\pi)^2}+\Vert\psi'(x)\Vert_{L^2(0,1)}^2+\Vert\phi'(x)\Vert_{L^2(0,1)}^2\right).
\end{array}\]
It is clear that the above series is uniformly convergent.
The main result for this section can be summarized in the following theorem:
\begin{theorem}
Assume $\psi(x),\phi(x)\in C^{2}[0,1]$ such that
 $\psi^{(i)}(0)=\psi^{(i)}(1)=\phi^{(i)}(0)=\phi^{(i)}(1)=0,$ $(i=0,2)$  and $\psi'''(x),\phi'''(x)\in L^{2}(0,1),$
then the inverse source problem (\ref{sinverse}) - (\ref{ISIC}) has  a unique pair of solutions $\left\lbrace u(x,t), f(x) \right\rbrace $  given by
$$
u(x,t)=\psi(x)-\displaystyle\sum_{k=1}^{\infty}\dfrac{1-E_{\alpha}\left(-\dfrac{k^2 \pi^2}{(1-\theta)^{\alpha}} t^{(1-\theta)\alpha}\right)}{1-E_{\alpha}\left(-\dfrac{k^2 \pi^2}{(1-\theta)^{\alpha}} T^{(1-\theta)\alpha}\right)}(\psi_k-\phi_k)\sin(k \pi x),
$$
$$
f(x)=\psi''(x)-\displaystyle\sum_{k=1}^{\infty}\dfrac{k^2 \pi^2 (\psi_k-\phi_k)}{1-E_{\alpha}\left(-\dfrac{k^2 \pi^2}{(1-\theta)^{\alpha}} T^{(1-\theta)\alpha}\right)}\sin(k \pi x),$$
where,
$$\psi _k=2\int_{0}^{1}\psi (x)\sin(k\pi x) dx, \quad \phi_k=2\int_{0}^{1}\phi(x)\sin(k\pi x) dx.$$
\end{theorem}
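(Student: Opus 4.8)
The plan is to establish four things, in order: (i) that the denominators $1-E_{\alpha}\!\left(-\dfrac{k^{2}\pi^{2}}{(1-\theta)^{\alpha}}T^{(1-\theta)\alpha}\right)$ are nonzero and in fact bounded below by a positive constant uniform in $k$; (ii) that the five series representing $u(x,t)$, $\,^{C}\!\left(t^{\theta}\dfrac{\partial}{\partial t}\right)^{\alpha}u(x,t)$, $u_{x}(x,t)$, $u_{xx}(x,t)$ and $f(x)$ converge uniformly on $\overline{\Omega}$; (iii) that the constructed pair $\{u,f\}$ then satisfies the equation (\ref{sinverse}), the boundary conditions (\ref{ISBC}) and both end conditions (\ref{ISIC}); and (iv) uniqueness. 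The reduction by separation of variables to the scalar problem and the resulting closed forms have already been derived before the statement, so the work lies in the analytic justifications. I expect step (i) to be the main obstacle, since well-definedness of $C_{k}$ and $f_{k}$ and the uniformity of all Mittag-Leffler estimates rest on it.

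For (i), write $\mu_{k}=k^{2}\pi^{2}/(1-\theta)^{\alpha}$. Since $0<\alpha<1$, the function $x\mapsto E_{\alpha}(-x)$ is completely monotone on $[0,\infty)$, hence strictly decreasing with values in $(0,1)$; consequently $D_{k}:=1-E_{\alpha}(-\mu_{k}T^{(1-\theta)\alpha})$ satisfies $0<D_{1}\le D_{k}<1$, so $D_{k}^{-1}\le D_{1}^{-1}$ for every $k$, with $D_{k}\to 1$ as $k\to\infty$. This makes $C_{k}$ and $f_{k}$ meaningful and lets the bound (\ref{MLbound}), which applies at $\arg z=\pi$, be used on every factor $E_{\alpha}(-\mu_{k}t^{(1-\theta)\alpha})$ and $E_{\alpha,2\alpha}(\cdots)$ with constants independent of $k$.

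For (ii), I would run exactly the estimates sketched in the paragraphs preceding the theorem. The hypotheses $\psi,\phi\in C^{2}[0,1]$ with $\psi^{(i)}(0)=\psi^{(i)}(1)=\phi^{(i)}(0)=\phi^{(i)}(1)=0$ for $i=0,2$ permit three integrations by parts in the sine coefficients, giving $|\psi_{k}|=(k\pi)^{-3}|\psi_{k}^{(3)}|$ and likewise for $\phi_{k}$, with $\psi_{k}^{(3)}=2\int_{0}^{1}\psi'''(x)\cos(k\pi x)\,dx$. Combining this with (\ref{MLbound}) and $D_{k}^{-1}\le D_{1}^{-1}$, the generic term of the $f$-series is $O(k^{-1})(|\psi_{k}^{(3)}|+|\phi_{k}^{(3)}|)$; then $2ab\le a^{2}+b^{2}$ together with Bessel's inequality and $\psi''',\phi'''\in L^{2}(0,1)$ produces a convergent numerical majorant, and the Weierstrass $M$-test gives uniform convergence. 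The series for $u$, $\,^{C}\!\left(t^{\theta}\dfrac{\partial}{\partial t}\right)^{\alpha}u$ and $u_{x}$ are handled the same way with fewer integrations by parts, while for $u_{xx}$ the extra $k^{2}\pi^{2}$ factor is absorbed by the denominator $(1-\theta)^{\alpha}+k^{2}\pi^{2}t^{(1-\theta)\alpha}$ from (\ref{MLbound}). Uniform convergence of these series also legitimizes term-by-term differentiation in $x$ and term-by-term application of the operator.

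For (iii), each summand $u_{k}(t)\sin(k\pi x)$ with $u_{k}(t)=C_{k}E_{\alpha}(-\mu_{k}t^{(1-\theta)\alpha})+f_{k}/(k^{2}\pi^{2})$ solves $\,^{C}\!\left(t^{\theta}\dfrac{d}{dt}\right)^{\alpha}u_{k}+k^{2}\pi^{2}u_{k}=f_{k}$ by the constant-source case of Lemma~\ref{nonhomo FDE}, so by (ii) the sum satisfies (\ref{sinverse}); the factor $\sin(k\pi x)$ in every term yields (\ref{ISBC}). At $t=0$ each term vanishes because $E_{\alpha}(0)=1$, so $u(x,0)=\psi(x)$; at $t=T$ the coefficient of $(\psi_{k}-\phi_{k})\sin(k\pi x)$ equals $1$, hence $u(x,T)=\psi(x)-\sum_{k}(\psi_{k}-\phi_{k})\sin(k\pi x)=\psi(x)-(\psi(x)-\phi(x))=\phi(x)$ using the uniformly convergent sine expansions of $\psi,\phi$, and the same manipulation rewrites $\sum_{k}f_{k}\sin(k\pi x)$ in the stated $\psi''$-form. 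For uniqueness, if $\{u_{1},f_{1}\}$ and $\{u_{2},f_{2}\}$ both solve the problem, set $\widehat{u}=u_{1}-u_{2}$, $\widehat{f}=f_{1}-f_{2}$, $u_{k}(t)=2\int_{0}^{1}\widehat{u}(x,t)\sin(k\pi x)\,dx$ and $f_{k}=2\int_{0}^{1}\widehat{f}(x)\sin(k\pi x)\,dx$; two integrations by parts with the homogeneous boundary data give $\,^{C}\!\left(t^{\theta}\dfrac{d}{dt}\right)^{\alpha}u_{k}+k^{2}\pi^{2}u_{k}=f_{k}$ with $u_{k}(0)=u_{k}(T)=0$, so by Lemma~\ref{nonhomo FDE} $u_{k}(t)=C_{k}E_{\alpha}(-\mu_{k}t^{(1-\theta)\alpha})+f_{k}/(k^{2}\pi^{2})$ and the two conditions force $C_{k}D_{k}=0$, hence $C_{k}=0$ by step (i), hence $f_{k}=0$ and $u_{k}\equiv0$; completeness of $\{\sin(k\pi x)\}$ in $L^{2}(0,1)$ then gives $\widehat{u}\equiv0$ and $\widehat{f}\equiv0$.
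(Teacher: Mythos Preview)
Your proposal is correct and follows the same route as the paper: eigenfunction expansion, Lemma~\ref{nonhomo FDE} for the scalar equations, then the Mittag--Leffler bound (\ref{MLbound}), integration by parts, Bessel's inequality and the Weierstrass $M$-test. You actually supply two ingredients the paper leaves implicit in Section~4: the uniform lower bound $D_{k}\ge D_{1}>0$ via complete monotonicity of $E_{\alpha}(-x)$ (the paper's estimate with $(1-M)(1-\theta)^{\alpha}+k^{2}\pi^{2}T^{(1-\theta)\alpha}$ in the denominator is less transparent), and a full uniqueness argument for the pair $\{u,f\}$, which the paper does not write out.

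One small point to tighten: for $u_{xx}$, absorbing the extra $k^{2}\pi^{2}$ into the denominator $(1-\theta)^{\alpha}+k^{2}\pi^{2}t^{(1-\theta)\alpha}$ from (\ref{MLbound}) produces a bound proportional to $t^{-(1-\theta)\alpha}$, so you do not get uniform convergence on $\overline{\Omega}$ that way. Instead use $|1-E_{\alpha}(-x)|\le 1$ for $x\ge 0$ (immediate from complete monotonicity) and the same three integrations by parts as for $f(x)$; this gives a $t$-independent majorant and the argument goes through on all of $\overline{\Omega}$.
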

\begin{description}
\item[Acknowledgements.]
Authors acknowledge financial support from The Research Council (TRC), Oman. This work is funded by TRC under the research agreement no. ORG/SQU/CBS/13/030.
\end{description}

\end{document}